\numberwithin{equation}{section}
\newtheorem{theorem}{Theorem}[section]
\newtheorem{proposition}[theorem]{Proposition}
\newtheorem{conjecture}[theorem]{Conjecture}
\theoremstyle{definition}
\newtheorem{remark}[theorem]{Remark}
\def\cc{\mathbf{c}}
\def\ee{\mathbf{e}}
\def\gg{\mathbf{g}}
\def\xx{\mathbf{x}}
\def\yy{\mathbf{y}}
\def\TT{\mathbb{T}}
\def\PP{\mathbb{P}}
\def\ZZ{\mathbb{Z}}
\def\QQ{\mathbb{Q}}
\def\Fcal{\mathcal{F}}
\newcommand{\overunder}[2]{
\!\begin{array}{c}
\scriptstyle{#1}\\[-.1in]
-\!\!\!-\!\!\!-\\[-.1in]
\scriptstyle{#2}
\end{array}
\!
}
\begin{document}

\title{On tropical dualities in cluster algebras}

\author{Tomoki Nakanishi}
\address{\noindent Graduate School of Mathematics, Nagoya University, Nagoya,
464-8604, Japan}
\email{nakanisi@math.nagoya-u.ac.jp}

\author{Andrei Zelevinsky}
\address{\noindent Department of Mathematics, Northeastern University,
Boston, MA 02115, USA}
\email{andrei@neu.edu}

\begin{abstract}
We study two families of integer vectors playing a crucial part in the structural theory of cluster algebras:
the $\gg$-vectors parameterizing cluster variables, and the $\cc$-vectors parameterizing the coefficients.
We prove two identities relating these vectors to each other.
The proofs depend on the sign-coherence assumption for $\cc$-vectors that still remains unproved in full generality.
  \end{abstract}

\date{January 19, 2011; revised April 13, 2011 and May 24, 2011}

\thanks{Research of A.~Z. supported by the NSF grant DMS-0801187.}

\maketitle

\section{Introduction}

In the theory of cluster algebras, the central role is played by two families of elements obtained by certain algebraic recurrences:
\emph{cluster variables} and \emph{coefficients} (V.~Fock and A.~Goncharov refer to them as
\emph{$a$-coordinates}, and \emph{$x$-coordinates}, respectively).
In this paper we study certain \emph{tropical limits} of these elements: two families of integer vectors (\emph{$\gg$-vectors} and
\emph{$\cc$-vectors}, respectively) introduced in \cite{ca4}.
Despite the fact that these two families are given by very different piecewise-polynomial recurrences, it turns our that they are closely related to each other.
We refer to the identities involving $\gg$-vectors and $\cc$-vectors as \emph{tropical dualities}.
The main result of this paper is Theorem~\ref{th:main} containing two such dualities.

We start by briefly recalling the formalism of cluster algebras (more details can be found in \cite{ca4}).
In the heart of this formalism there are several discrete dynamical systems on a $n$-regular tree
given by birational recurrences and their ``tropical'' versions.
More precisely, we fix a positive integer~$n$, and denote by $\TT_n$ an \emph{$n$-regular tree}
whose edges are labeled by the numbers $1, \dots, n$,
so that the $n$ edges emanating from each vertex receive different labels.
We write $t \overunder{k}{} t'$ to indicate that vertices
$t,t'\in\TT_n$ are joined by an edge labeled by~$k$.

\smallskip

We need a little algebraic preparation.
We call a \emph{semifield} an
abelian multiplicative group~$\PP$, supplied with the addition operation
$\otimes$, which is commutative, associative, and distributive with respect to multiplication.
Let $\ZZ \PP$ denote the integer group ring of (the multiplicative group of) $\PP$
(note that its definition\emph{ignores} the addition $\oplus$).
It is easy to show that (the multiplicative group of) $\PP$ is torsion-free (see the remark before Definition~5.3 in \cite{ca1}), hence $\ZZ \PP$ is a domain, hence it has the  field of fractions $\QQ (\PP)$.

We call an \emph{ambient field} a field
$\Fcal$ isomorphic to the field of rational functions in $n$ indeterminates
with the coefficients from $\QQ(\PP)$.

\smallskip

Fix a semifield $\PP$ and an ambient field $\Fcal$.
Following  \cite{ca4}, we call a \emph{(labeled) seed}
a triple $(\xx, \yy, B)$, where
\begin{itemize}
\item
$B = (b_{ij})$ is an $n\!\times\! n$ integer matrix
which is \emph{skew-symmetrizable}, that~is, its transpose
$B^T$ is equal to $- D B D^{-1}$ for some diagonal matrix $D$ with positive integer diagonal entries
$d_1, \dots, d_n$;
\item
$\yy = (y_1, \dots, y_n)$ is an $n$-tuple
of elements of~$\PP$, and
\item
$\xx = (x_1, \dots, x_n)$ is an $n$-tuple of elements of~$\Fcal$ forming a \emph{free generating set},
that is, being algebraically independent
and such that $\Fcal = \QQ(\PP)(x_1, \dots, x_n)$.
\end{itemize}

We refer to $B$ as the \emph{exchange matrix} of a seed, $\yy$ as the
\emph{coefficient tuple}, and $\xx$ as the \emph{cluster}.

\smallskip

Throughout the paper we use the notation $[b]_+ = \max(b,0)$.
For $k = 1, \dots, n$, the \emph{seed mutation} $\mu_k$ transforms
$(\xx, \yy, B)$ into the labeled seed
$\mu_k(\xx, \yy, B)=(\xx', \yy', B')$ defined as follows:
\begin{itemize}
\item
The entries of the exchange matrix $B'=(b'_{ij})$ are given by
\begin{equation}
\label{eq:matrix-mutation}
b'_{ij} =
\begin{cases}
-b_{ij} & \text{if $i=k$ or $j=k$;} \\[.05in]
b_{ij} + [b_{ik}]_+ [b_{kj}]_+ - [-b_{ik}]_+ [-b_{kj}]_+
 & \text{otherwise.}
\end{cases}
\end{equation}
\item
The coefficient tuple $\yy'=(y_1',\dots,y_n')$ is given by
\begin{equation}
\label{eq:y-mutation}
y'_j =
\begin{cases}
y_k^{-1} & \text{if $j = k$};\\[.05in]
y_j y_k^{[b_{kj}]_+}
(y_k \oplus 1)^{- b_{kj}} & \text{if $j \neq k$}.
\end{cases}
\end{equation}
\item
The cluster $\xx'=(x_1',\dots,x_n')$ is given by
$x_j'=x_j$ for $j\neq k$,
whereas $x'_k \in \Fcal$ is determined
by the \emph{exchange relation}
\begin{equation}
\label{eq:exchange-rel-xx}
x'_k = \frac
{y_k \ \prod x_i^{[b_{ik}]_+}
+ \ \prod x_i^{[-b_{ik}]_+}}{(y_k \oplus 1) x_k} \, .
\end{equation}
\end{itemize}

It is easy to see that $B'$ is
skew-symmetrizable (with the same choice of $D$),
implying that $(\xx', \yy', B')$ is indeed a seed.
Furthermore, the seed mutation $\mu_k$ is involutive,
that is, it transforms $(\xx', \yy', B')$ into the original seed
$(\xx, \yy, B)$.
We shall also use the notation $B' = \mu_k(B)$ (resp.~$(\yy', B') = \mu_k(\yy, B)$) and call the transformation
$B \mapsto B'$ the \emph{matrix mutation} (resp.~$(\yy, B) \mapsto (\yy', B')$ the \emph{$Y$-seed mutation}).

\smallskip

A \emph{seed pattern} is an assignment
of a seed $(\xx_t, \yy_t, B_t)$
to every vertex $t \in \TT_n$, such that the seeds assigned to the
endpoints of any edge $t \overunder{k}{} t'$ are obtained from each
other by the seed mutation~$\mu_k$.
We write:
\begin{equation}
\label{eq:seed-labeling}
\xx_t = (x_{1;t}\,,\dots,x_{n;t})\,,\quad
\yy_t = (y_{1;t}\,,\dots,y_{n;t})\,,\quad
B_t = (b_{ij;t})\,.
\end{equation}
We also refer to a family $(\yy_t, B_t)|_{t \in \TT_n}$ as a \emph{$Y$-seed pattern}, and
a family $(B_t)|_{t \in \TT_n}$ as an \emph{exchange matrix pattern}.

It is convenient to fix a vertex (\emph{root}) $t_0 \in \TT_n$.
Then a seed pattern is uniquely determined by a seed at $t_0$, which can be chosen arbitrarily.
We will just write this initial seed as $(\xx, \yy, B)$, with
\begin{equation}
\label{eq:initial-seed-labeling}
\xx = (x_{1}\,,\dots,x_{n})\,,\quad
\yy = (y_{1}\,,\dots,y_{n})\,,\quad
B = (b_{ij})\,.
\end{equation}

\smallskip

The \emph{cluster algebra} associated with a seed $(\xx, \yy, B)$ (or rather with the corresponding seed pattern) is the
$\ZZ \PP$-subalgebra of the ambient field $\Fcal$ generated by all \emph{cluster variables} $x_{j;t}$.
One of the central problems in the theory of cluster algebras is to find explicit expressions for
all $x_{j;t}$ and $y_{j;t}$ as rational functions in $x_1, \dots, x_n, y_1, \dots, y_n$.
A step in this direction was made in \cite{ca4}, where the following was proved.

\begin{proposition}
\label{pr:F-g-c}
{\rm \cite[Proposition~3.13, Corollary~6.3]{ca4}}
Every pair $(B;t_0)$ gives rise to a family of polynomials
$F_{j;t} = F_{j;t}^{B;t_0} \in \ZZ[u_1, \dots, u_n]$ and two families of integer vectors
$\cc_{j;t} = \cc_{j;t}^{B;t_0} = (c_{1j;t}, \dots, c_{nj;t}) \in \ZZ^n$ and
$\gg_{j;t} = \gg_{j;t}^{B;t_0} = (g_{1j;t}, \dots, g_{nj;t}) \in \ZZ^n$
(where $j \in \{1, \dots, \}$ and $t \in \TT_n$)
with the following properties:
\begin{enumerate}
\item
Each $F_{j;t}$ is not divisible by any $u_i$, and can be expressed as a ratio
of two polynomials in $u_1, \dots, u_n$ with positive integer coefficients, thus can be evaluated
in every  semifield~$\PP$.
\item
For any $j$ and $t$, we have
\begin{equation}
\label{eq:yjt}
y_{j;t} = y_1^{c_{1j;t}} \cdots y_n^{c_{nj;t}} \, \prod_i F_{i;t}|_\PP (y_1, \dots, y_n)^{b_{ij;t}}\ .
\end{equation}
\item
For any $j$ and $t$, we have
\begin{equation}
\label{eq:xjt}
x_{j;t} = x_1^{g_{1j;t}} \cdots x_n^{g_{nj;t}} \,\frac{F_{j;t}|_\Fcal(\hat y_1, \dots, \hat y_n)}
{F_{j;t}|_\PP (y_1, \dots, y_n)} \ ,
\end{equation}
where the elements $\hat y_j$ are given by
$$\hat y_j = y_j \prod_i x_i^{b_{ij}} \ .$$
\end{enumerate}
\end{proposition}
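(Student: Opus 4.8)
The plan is to prove all three properties by the method of \emph{separation of additions}: first establish them in the universal setting of \emph{principal coefficients} at $t_0$, and then transport the resulting identities into an arbitrary semifield $\PP$ using the functoriality of subtraction-free evaluation. To set up principal coefficients I would take $\PP$ to be the tropical semifield on the free abelian group generated by $y_1, \dots, y_n$, with initial coefficient tuple equal to the generators themselves. For this choice each exchange relation \eqref{eq:exchange-rel-xx} is subtraction-free in the preceding cluster, so the Laurent phenomenon places every cluster variable $X_{j;t}$ in $\ZZ[x_1^{\pm 1}, \dots, x_n^{\pm 1}][y_1, \dots, y_n]$. I would then read off the three families directly from this algebra: set $F_{j;t}(u_1, \dots, u_n) := X_{j;t}|_{x_1 = \cdots = x_n = 1}$; let $\cc_{j;t}$ be the exponent vector of the coefficient $y_{j;t}$, which is a Laurent monomial in the tropical semifield by \eqref{eq:y-mutation}; and let $\gg_{j;t}$ be the multidegree of $X_{j;t}$ under the $\ZZ^n$-grading determined by $\deg x_i = \ee_i$ and $\deg y_j = -(b_{1j}, \dots, b_{nj})$.

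The structural cornerstone, which I would prove by induction on the distance from $t_0$ using \eqref{eq:matrix-mutation}--\eqref{eq:exchange-rel-xx}, is that each $X_{j;t}$ is \emph{homogeneous} for this grading. This both makes $\gg_{j;t}$ well defined and yields the factorization $X_{j;t}(\xx; \yy) = \xx^{\gg_{j;t}} \, F_{j;t}(\hat y_1, \dots, \hat y_n)$, because each $\hat y_j = y_j \prod_i x_i^{b_{ij}}$ has degree $0$ and the change of variables from $(\xx, \yy)$ to $(\xx, \hat{\yy})$ is an invertible monomial substitution; specializing $\xx = (1, \dots, 1)$ sends $\hat y_j \mapsto y_j$ and recovers $F_{j;t}$. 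The same induction gives property~(1): the subtraction-free form of \eqref{eq:exchange-rel-xx} exhibits each $F_{j;t}$ as a ratio of polynomials with positive integer coefficients, hence evaluable in any semifield, while the parallel claim that the constant term of $F_{j;t}$ equals $1$ rules out divisibility by any $u_i$.

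To obtain the general formulas~(2) and~(3) I would invoke the defining property of the universal semifield $\QQ_{\mathrm{sf}}(y_1, \dots, y_n)$: any semifield with a chosen tuple $(y_1, \dots, y_n)$ receives a unique map from it that is multiplicative and preserves $\oplus$, and both the $Y$-seed mutation \eqref{eq:y-mutation} and the exchange relation \eqref{eq:exchange-rel-xx} commute with such maps. Computing the $Y$-pattern in $\QQ_{\mathrm{sf}}$ and factoring each $y_{j;t}$ into its tropical (monomial) part and its $F$-polynomial part identifies the former with $\prod_i y_i^{c_{ij;t}}$ and the latter with $\prod_i F_{i;t}^{b_{ij;t}}$; pushing this through the map into $\PP$ gives \eqref{eq:yjt}. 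Likewise, pushing the principal-coefficient factorization through $\Fcal$, after checking that the $\hat y_j$ defined there match the substitution above, gives \eqref{eq:xjt}.

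The main obstacle is the homogeneity statement and its role in separation. One must verify that the $\ZZ^n$-grading is preserved by every mutation---equivalently, that the two monomials in the numerator of \eqref{eq:exchange-rel-xx} always carry the same degree, in a way compatible with \eqref{eq:matrix-mutation}---and then that the tropical-versus-$F$ factorization of the universal $Y$-variables is consistent under mutation. The delicate point throughout is that the transfer map is only a homomorphism of \emph{semifields}, not of rings, so every intermediate expression must be kept manifestly subtraction-free; this is precisely why the argument is organized around principal coefficients and the tropical and universal semifields rather than working over $\PP$ from the start.
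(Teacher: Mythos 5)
You should first note that the paper does not prove Proposition~\ref{pr:F-g-c} at all: it is imported verbatim from \cite[Proposition~3.13, Corollary~6.3]{ca4}, so the only proof to compare against is the one in that source. In outline, your plan reconstructs exactly that argument: principal coefficients realized in the tropical semifield on $y_1,\dots,y_n$; $F_{j;t}$ defined by specializing $x_1=\cdots=x_n=1$; $\cc_{j;t}$ read off from the (automatically monomial) tropical coefficients; $\gg_{j;t}$ defined as the multidegree for the grading $\deg x_i=\ee_i$, $\deg y_j=-(b_{1j},\dots,b_{nj})$; and transfer to an arbitrary semifield via the universal semifield. So the route is the right one, but two of your steps do not go through as written. (A smaller imprecision: the Laurent phenomenon by itself only gives Laurent dependence on the $y_j$; the polynomiality needed for $F_{j;t}\in\ZZ[u_1,\dots,u_n]$ is a separate result, \cite[Proposition~3.6]{ca4}.)

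First, your argument for part (1) derives non-divisibility of $F_{j;t}$ by the $u_i$ from ``the constant term of $F_{j;t}$ equals $1$.'' That claim is precisely \eqref{eq:F-constant-term}, i.e.\ \cite[Conjecture~5.4]{ca4}, which this paper stresses is \emph{still open} in general (it is equivalent to the sign-coherence assumption \eqref{eq:C-sign-coherence} and is known only for skew-symmetric $B$). Non-divisibility is strictly weaker (a polynomial like $u_1+u_2$ has no constant term yet is divisible by neither variable) and is proved in \cite{ca4} unconditionally by a different argument; invoking the conjecture here makes your proof of (1) circular relative to the framework of this paper. Second, the ``structural cornerstone''---homogeneity of $X_{j;t}$---is flagged as the main obstacle but never resolved, and the naive induction you propose does not close: equality of the degrees of the two monomials in the numerator of \eqref{eq:exchange-rel-xx} at a vertex $t$ is \emph{equivalent} to the nontrivial identity $G_tB_t=BC_t$ (identity \eqref{eq:GBt=BC} of this paper), which your inductive hypothesis (homogeneity alone) does not supply. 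In \cite{ca4} this is handled by proving the coefficient formula (2) and its hatted analog \emph{before} the grading argument: once one knows that $\hat y_{k;t}=y_{k;t}\prod_i x_{i;t}^{b_{ik;t}}$ is a subtraction-free expression in the degree-zero elements $\hat y_1,\dots,\hat y_n$, it has degree zero, and the two numerator degrees agree. So the logical order matters---(2) must come first---whereas your plan treats (2) and homogeneity as parallel and leaves exactly this compatibility unproven (you would otherwise have to carry $G_tB_t=BC_t$ through the induction and verify its propagation under mutation, an argument your proposal does not supply).
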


Following \cite{ca4}, we refer to $\gg_{j;t}$ as the \emph{$\gg$-vector} of a cluster variable $x_{j;t}$.
It is instructive to view the $\gg$-vectors as some kind of discrete (or \emph{tropical}) limits of cluster variables.
In fact, as explained in \cite[Remark~7.15]{ca4}, these vectors are expected to provide a parametrization of cluster variables (and more generally, cluster monomials)
by ``Langlands dual tropical $Y$-seed patterns" in accordance with a conjecture by V.~Fock and A.~Goncharov \cite[Conjecture~5.1]{fg}.
Some properties of $\gg$-vectors and $F$-polynomials were established in \cite{ca4} but several basic properties still remain conjectural.

\smallskip

Comparing \eqref{eq:yjt} and \eqref{eq:xjt}, it is natural to view the \emph{$\cc$-vector} $\cc_{j;t}$ as a
tropical limit of a coefficient $y_{j;t}$.
These vectors also made their appearance in \cite{ca4}, although not under this name, and only as a tool for studying
$\gg$-vectors and $F$-polynomials.
In this note we focus on the properties of $\cc$-vectors and their relationships with $\gg$-vectors.

\smallskip

The following (unfortunately, still conjectural) \emph{sign-coherence} property of $\cc$-vectors is crucial for our analysis:
\begin{equation}
\label{eq:C-sign-coherence}
\text{Each vector $\cc_{j;t}$ has either all entries nonnegative or all entries nonpositive.}
\end{equation}
As shown in \cite[Proposition~5.6]{ca4}, \eqref{eq:C-sign-coherence} is equivalent to the following conjecture
made in \cite[Conjecture~5.4]{ca4}:
\begin{equation}
\label{eq:F-constant-term}
\text{Each polynomial $F_{j;t}(u_1, \dots, u_n)$ has constant term~$1$.}
\end{equation}
This conjecture (hence the property \eqref{eq:C-sign-coherence})
was proved in \cite{dwz2} for the case of \emph{skew-symmetric} exchange matrices, using
\emph{quivers with potentials} and their representations
(two different proofs were recently given in \cite{nag,pla}).

\smallskip

We denote by $C_t^{B;t_0}$ (resp.~$G_t^{B;t_0}$) the integer matrix with columns  $\cc_{1;t}, \dots, \cc_{n;t}$
(resp. with columns  $\gg_{1;t}, \dots, \gg_{n;t}$), where $B$ is the exchange matrix at $t_0$.
In particular, we have
\begin{equation}
\label{eq:C-G-initial}
C_{t_0}^{B;t_0} = G_{t_0}^{B;t_0} = I \quad \text{(the identity matrix).}
\end{equation}

The main results of this paper are the following two identities.

\begin{theorem}
\label{th:main}
Under the assumption \eqref{eq:C-sign-coherence},
for any skew-symmetrizable exchange matrix~$B$, and any $t_0, t \in \TT_n$,
we have
\begin{equation}
\label{eq:G-C-inverse}
(G_{t}^{B;t_0})^T = (C_t^{-B^T;t_0})^{-1},
\end{equation}
and
\begin{equation}
\label{eq:C-C-opposite}
C_t^{B;t_0} = (C_{t_0}^{-B_t;t})^{-1},
\end{equation}
where $t \mapsto B_t$ is the exchange matrix pattern on $\TT_n$ such that
$B_{t_0} = B$, and $B^T$ stands for the transpose matrix of~$B$.
\end{theorem}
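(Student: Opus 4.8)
The plan is to reduce both identities to the behaviour of the $C$- and $\gg$-matrices under a single mutation, and then to argue by induction on the distance $d(t_0,t)$ in $\TT_n$. First I would record the matrix mutation rules that follow, under \eqref{eq:C-sign-coherence}, from Proposition~\ref{pr:F-g-c}. Tropicalizing \eqref{eq:y-mutation} in $\mathrm{Trop}(y_1,\dots,y_n)$ and reading off exponents via \eqref{eq:yjt} gives, for $t\overunder{k}{}t'$,
\[
c_{ij;t'}=c_{ij;t}+[\epsilon b_{kj;t}]_+\,c_{ik;t}\ (j\neq k),\qquad c_{ik;t'}=-c_{ik;t},
\]
where $\epsilon=\epsilon_{k;t}\in\{\pm1\}$ is the common sign of the column $\cc_{k;t}$; this is exactly where \eqref{eq:C-sign-coherence} enters, collapsing the two cases of the tropicalized $(y_k\oplus1)$ into a single sign. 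In matrix form $C_{t'}=C_t\,F_{k,\epsilon}(B_t)$, where $F_{k,\epsilon}(B)$ is the identity except in row $k$, with $(F_{k,\epsilon})_{kk}=-1$ and $(F_{k,\epsilon})_{kj}=[\epsilon b_{kj}]_+$ for $j\neq k$. A parallel analysis of \eqref{eq:xjt} and the exchange relation \eqref{eq:exchange-rel-xx}, using that $\deg x_{j;t}=\gg_{j;t}$ for the grading $\deg x_i=\ee_i$, $\deg\hat y_j=0$, and that \eqref{eq:F-constant-term} pins down the leading term, yields $G_{t'}=G_t\,E_{k,\epsilon}(B_t)$ with the \emph{same} sign $\epsilon$, where $E_{k,\epsilon}(B)$ is the identity except in column $k$, with $(E_{k,\epsilon})_{kk}=-1$ and $(E_{k,\epsilon})_{ik}=[-\epsilon b_{ik}]_+$ for $i\neq k$.

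The three elementary facts I would isolate about these matrices are: $E_{k,\epsilon}(B)^2=F_{k,\epsilon}(B)^2=I$; the transpose identity $E_{k,\epsilon}(B)^T=F_{k,\epsilon}(-B^T)$ (a one-line check from the definitions); and the behaviour of the ambient data under $B\mapsto-B^T$. Since matrix mutation commutes with $B\mapsto-B^T$, the exchange matrix pattern for $-B^T$ is $-B_t^{\,T}$; and since skew-symmetrizability gives $-B^T=DBD^{-1}$, the conjugation-equivariance of the $C$-recurrence yields $C_t^{-B^T;t_0}=D\,C_t^{B;t_0}D^{-1}$. This last relation is crucial, because conjugation by the positive diagonal $D$ preserves the sign of each column, so the tropical signs for the patterns $B$ and $-B^T$ coincide at every step.

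With these in hand, \eqref{eq:G-C-inverse} is a clean induction on $d(t_0,t)$, with base case \eqref{eq:C-G-initial}. For the step $t\overunder{k}{}t'$, abbreviate $\bar C_t=C_t^{-B^T;t_0}$; then $\bar C_{t'}=\bar C_t\,F_{k,\epsilon}(-B_t^{\,T})=\bar C_t\,E_{k,\epsilon}(B_t)^T$ (same $\epsilon$, by the sign-coincidence above), while $G_{t'}^{\,T}=E_{k,\epsilon}(B_t)^T G_t^{\,T}$. Hence
\[
G_{t'}^{\,T}\bar C_{t'}=E_{k,\epsilon}(B_t)^T\big(G_t^{\,T}\bar C_t\big)E_{k,\epsilon}(B_t)^T=E_{k,\epsilon}(B_t)^TE_{k,\epsilon}(B_t)^T=\big(E_{k,\epsilon}(B_t)^2\big)^T=I,
\]
using the inductive hypothesis $G_t^{\,T}\bar C_t=I$. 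This is \eqref{eq:G-C-inverse}.

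For \eqref{eq:C-C-opposite} I would again induct on $d(t_0,t)$, mutating at the $t$-end. Writing $C_{t'}^{B;t_0}=C_t^{B;t_0}F_{k,\epsilon}(B_t)$ and using the inductive hypothesis $(C_t^{B;t_0})^{-1}=C_{t_0}^{-B_t;t}$ together with the involutivity of $F_{k,\epsilon}$, the step reduces to the \emph{change-of-root} identity
\[
C_{t_0}^{-B_{t'};t'}=F_{k,\epsilon}(B_t)\,C_{t_0}^{-B_t;t}\qquad(\epsilon=\epsilon_{k;t}^{B;t_0}),
\]
comparing the $C$-matrices at the \emph{fixed} vertex $t_0$ of the single pattern $-B_\bullet$ under a move of the \emph{root} from $t$ to the adjacent $t'$; equivalently, via \eqref{eq:G-C-inverse} and the conjugation relation, it is the $\gg$-matrix self-duality $G_{t_0}^{-B_t;t}=(G_t^{B;t_0})^{-1}$. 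I expect this change-of-root step to be the main obstacle: unlike mutation of the moving vertex, moving the root is \emph{not} implemented by left multiplication by a single matrix $F_{k,\cdot}$ (one checks $F_{k,-1}(B)F_{k,+1}(B)\neq I$), precisely because the tropical signs are recomputed relative to the new root. The essential input is therefore the synchronization of the tropical signs across the change of root, which I would extract from sign-coherence together with the structural relation $B_t=(G_t^{B;t_0})^{-1}B\,C_t^{B;t_0}$ (itself a consequence of \eqref{eq:yjt}--\eqref{eq:xjt} via the two gradings); this relation lets one trade the sign-dependent recurrences for sign-free matrix identities and thereby close the induction.
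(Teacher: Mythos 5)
Your treatment of \eqref{eq:G-C-inverse} is correct and is essentially the paper's own argument: your $F_{k,\epsilon}(B_t)$ and $E_{k,\epsilon}(B_t)$ are exactly the matrices $J_\ell + [\varepsilon B_t]_+^{\ell \bullet}$ and $J_\ell + [-\varepsilon B_t]_+^{\bullet \ell}$ of Proposition~\ref{pr:G-C-mutation-right-end}, your three ``elementary facts'' are \eqref{eq:J+B-ell-transpose-inverse} and \eqref{eq:replacing-B-by--BT} together with the resulting coincidence of tropical signs for the patterns $B$ and $-B^T$, and the induction on $d(t_0,t)$ then closes exactly as in Section~\ref{sec:G-C-inverse-proofs}.

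For \eqref{eq:C-C-opposite}, however, there is a genuine gap. You correctly reduce the inductive step to the change-of-root identity $C_{t_0}^{-B_{t'};t'} = F_{k,\epsilon}(B_t)\, C_{t_0}^{-B_t;t}$ with $\epsilon = \varepsilon_k(C_t^{B;t_0})$ --- this is precisely the paper's Proposition~\ref{pr:C-mutation-left-end}, i.e.\ \eqref{eq:C'-thru-C-left-end} after relabeling --- and you correctly flag it as the main obstacle. But you do not prove it; you only gesture at ``synchronization of tropical signs'' plus the relation $G B_t = B C$ (the paper's \eqref{eq:GBt=BC}). That is where all the real work of the theorem lies. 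In the paper the change-of-root recurrence cannot be established on its own: it is proved by a \emph{simultaneous} induction with \eqref{eq:C-C-opposite} itself (the statements $(I_d)$ and $(II_d)$ of Section~\ref{sec:C-C-inverse-proofs}), and the hard implication $(I_d)\ \&\ (II_d) \Rightarrow (II_{d+1})$ requires a case split that your sketch does not anticipate. In Case~1 (the $\ell$th column of $C_t^{B;t_0}$ has a nonzero entry outside row $k$) the signs do synchronize, but proving $\varepsilon_k(C_{t_0}^{-B_{t'};t'}) = \varepsilon_k(C_{t_0}^{-B_t;t})$ needs the linear-algebra observation \eqref{eq:inverses-columns-agree} applied to the mutually inverse pair $C_t^{B;t_0}$, $C_{t_0}^{-B_t;t}$ supplied by $(I_d)$. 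In Case~2 (that column equals $\varepsilon \ee_k$) your synchronization heuristic fails outright: both relevant signs \emph{flip} to $-\varepsilon$, the desired identity no longer follows formally from composing recurrences, and it instead reduces to the new identity $B^{k\bullet} C = C B_t^{\ell\bullet}$ of \eqref{eq:BC=CBt}. Proving that identity uses \eqref{eq:G-C-inverse} (already established), an integrality argument forcing the symmetrizer entries to satisfy $d_k = d_\ell$, and only then the relation \eqref{eq:GBt=BC} you mention. So your plan names the right ingredients but neither sets up the simultaneous induction that makes them usable nor supplies the case analysis in which they are deployed; as written, the second identity of the theorem is not proved.
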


Note that in the case when $B$ is skew-symmetric, \eqref{eq:G-C-inverse}
takes the form $(G_{t}^{B;t_0})^T = (C_t^{B;t_0})^{-1}$.
In this case it was proved in \cite[Proposition~4.1]{nak}.

One can show that identities \eqref{eq:G-C-inverse} and \eqref{eq:C-C-opposite}
imply most of the conjectures made in \cite{ca4} and recast in \cite{dwz2}, namely
Conjectures~1.1 - 1.4 and 1.6 from \cite{dwz2}.
For instance, by combining them, we obtain the equality
\begin{equation}
\label{eq:G-C-opposite}
(G_t^{B;t_0})^T = C_{t_0}^{B_t^T;t},
\end{equation}
which can be shown to imply \cite[Conjecture~1.6]{dwz2}.
More details will be given in Section~\ref{sec:corollaries}.

\smallskip

In contrast with \cite{dwz2}, the proofs of \eqref{eq:G-C-inverse} and \eqref{eq:C-C-opposite}
given below do not use any categorical interpretation of $\gg$- or $\cc$-vectors but
are based on the analysis of recurrences satisfied by them.
To describe these recurrences we need to develop some matrix formalism.

\smallskip

We extend the notation $[b]_+ = \max(0,b)$ to matrices, writing $[B]_+$ for the matrix obtained from $B$ by applying
the operation $b \mapsto [b]_+$ to all entries of $B$.
For a matrix index~$k$, we denote by $B^{\bullet k}$ the matrix obtained from $B$ by replacing all entries
outside of the $k$-th column with zeros; the matrix $B^{k \bullet}$
is defined similarly using the $k$-th row instead of the column.
Note that the operations $B \mapsto [B]_+$ and $B \mapsto B^{\bullet k}$ commute with each other, making
the notation $[B]_+^{\bullet k}$ (and $[B]_+^{k \bullet}$) unambiguous.

\smallskip

Using this formalism, we can rephrase the sign-coherence condition \eqref{eq:C-sign-coherence} for a matrix
$C = C_t^{B;t_0}$ as follows:
\begin{equation}
\label{eq:C-sign-coherence-matrix-form}
\text{For every~$j$, there exists the sign $\varepsilon_j(C) = \pm 1$ such that
$[-\varepsilon_j(C) C]_+^{\bullet j} = 0$.}
 \end{equation}

Let $J_k$ denote the diagonal matrix obtained from the identity matrix by replacing the $(k,k)$-entry with $-1$.
We deduce \eqref{eq:G-C-inverse} from the following proposition.

\begin{proposition}
\label{pr:G-C-mutation-right-end}
Suppose $t \overunder{\ell}{} t'$ in $\TT_n$, and let
$C = C_t^{B;t_0}$, $G = G_t^{B;t_0}$, $C' = C_{t'}^{B;t_0}$, and
$G' = G_{t'}^{B;t_0}$.
Then, under the assumption that $C$ satisfies \eqref{eq:C-sign-coherence-matrix-form}, we have
\begin{equation}
\label{eq:C'G'-thru-CG-right-end}
C' = C (J_\ell + [\varepsilon_\ell(C) B_t]_+^{\ell \bullet}),
\quad G' = G (J_\ell + [- \varepsilon_\ell(C) B_t]_+^{\bullet \ell}) \ .
\end{equation}
\end{proposition}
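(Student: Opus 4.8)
The plan is to deduce both matrix identities in \eqref{eq:C'G'-thru-CG-right-end} from the mutation recurrences satisfied by the individual vectors $\cc_{j;t}$ and $\gg_{j;t}$ recorded in \cite{ca4}, and then to use the sign-coherence hypothesis \eqref{eq:C-sign-coherence-matrix-form} to linearize the single nonlinear ingredient in those recurrences, namely the tropical sum $y_{\ell;t}\oplus1$. Throughout I would write $\varepsilon=\varepsilon_\ell(C)$ for the common sign of the entries of the $\ell$-th column $\cc_{\ell;t}$ of $C$, and work in the principal-coefficient case $\PP=\mathrm{Trop}(y_1,\dots,y_n)$, where $y_{j;t}=\prod_i y_i^{c_{ij;t}}$ by the very definition of the $\cc$-vectors. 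The two elementary identities $[b]_+-b=[-b]_+$ and $\min(c,0)=-[-c]_+$ are exactly what will make the two sign cases $\varepsilon=\pm1$ collapse to one formula.

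For the first identity I would read off exponents from the $Y$-seed mutation \eqref{eq:y-mutation}. This gives $\cc_{\ell;t'}=-\cc_{\ell;t}$ and, for $j\neq\ell$, $\cc_{j;t'}=\cc_{j;t}+[b_{\ell j;t}]_+\cc_{\ell;t}+b_{\ell j;t}[-\cc_{\ell;t}]_+$, the last term arising from the exponent $\min(\cc_{\ell;t},0)=-[-\cc_{\ell;t}]_+$ of $y_{\ell;t}\oplus1$. Sign-coherence forces $[-\cc_{\ell;t}]_+=0$ when $\varepsilon=+1$ and $[-\cc_{\ell;t}]_+=-\cc_{\ell;t}$ when $\varepsilon=-1$; substituting and simplifying with $[b]_+-b=[-b]_+$ turns both cases into the single relation $\cc_{j;t'}=\cc_{j;t}+[\varepsilon b_{\ell j;t}]_+\cc_{\ell;t}$. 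Since the $j$-th column of $J_\ell+[\varepsilon B_t]_+^{\ell\bullet}$ is $\ee_j+[\varepsilon b_{\ell j;t}]_+\ee_\ell$ for $j\neq\ell$ and $-\ee_\ell$ for $j=\ell$, right multiplication of $C$ by this matrix reproduces exactly these columns.

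For the second identity note first that $\gg_{j;t'}=\gg_{j;t}$ for $j\neq\ell$, since the corresponding cluster variables coincide, so only the $\ell$-th column moves. For that column I would use the $\ZZ^n$-grading of \cite{ca4}, with $\deg x_i=\ee_i$ and $\deg y_i=-B\ee_i$, under which each $\hat y_j$ has degree $0$ and each $x_{j;t}$ is homogeneous of degree $\gg_{j;t}$, and apply it to the exchange relation \eqref{eq:exchange-rel-xx} at $t$. Homogeneity forces the two monomials of the numerator to have equal degree and produces, from either of them, the expression $\gg_{\ell;t'}=(\text{its degree})-\deg(y_{\ell;t}\oplus1)-\gg_{\ell;t}$, where $\deg(y_{\ell;t}\oplus1)=B[-\cc_{\ell;t}]_+$. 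When $\varepsilon=+1$ this degree is $0$, and the monomial $\prod_i x_{i;t}^{[-b_{i\ell;t}]_+}$ gives $\gg_{\ell;t'}=-\gg_{\ell;t}+\sum_i[-b_{i\ell;t}]_+\gg_{i;t}$; when $\varepsilon=-1$ it equals $-B\cc_{\ell;t}$, and the monomial $y_{\ell;t}\prod_i x_{i;t}^{[b_{i\ell;t}]_+}$, of degree $-B\cc_{\ell;t}+\sum_i[b_{i\ell;t}]_+\gg_{i;t}$, has its two copies of $-B\cc_{\ell;t}$ cancel, leaving $\gg_{\ell;t'}=-\gg_{\ell;t}+\sum_i[b_{i\ell;t}]_+\gg_{i;t}$. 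Both cases are the single relation $\gg_{\ell;t'}=-\gg_{\ell;t}+\sum_i[-\varepsilon b_{i\ell;t}]_+\gg_{i;t}$, which is $G$ applied to the $\ell$-th column $-\ee_\ell+\sum_i[-\varepsilon b_{i\ell;t}]_+\ee_i$ of $J_\ell+[-\varepsilon B_t]_+^{\bullet\ell}$.

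The main obstacle is precisely the term $y_{\ell;t}\oplus1$: before sign-coherence it is genuinely piecewise-linear and is the sole source of nonlinearity in the recurrences, so the real content of the proposition is that \eqref{eq:C-sign-coherence-matrix-form} linearizes it. The delicate bookkeeping is to check that the two sign cases yield the \emph{same} matrix and, for the $\gg$-vectors, that the spurious appearance of the initial matrix $B$ through $B[-\cc_{\ell;t}]_+$ cancels. I would arrange this cancellation by playing the two monomials of the exchange relation off against each other, choosing for each sign the monomial whose degree absorbs $\deg(y_{\ell;t}\oplus1)$; alternatively one could invoke the identity $B\,C_t=G_t B_t$, but the homogeneity argument keeps the proof self-contained.
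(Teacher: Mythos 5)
Your proposal is correct and takes essentially the same route as the paper: both proofs reduce to the same two column recurrences for $C$ and $G$ (each carrying a correction term proportional to $[-\varepsilon C]_+^{\bullet \ell}$) and then invoke sign-coherence, via $[b]_+ - b = [-b]_+$, to make that term vanish. The only real difference is presentational: the paper imports the recurrences ready-made from \cite{ca3,ca4} (the mutation rule for $\tilde B_t$ and equations (6.12)--(6.13) of \cite{ca4}) and eliminates the correction term uniformly in $\varepsilon$, whereas you re-derive them---from the tropical $Y$-mutation with principal coefficients and from the $\ZZ^n$-grading applied to the exchange relation, which is exactly how \cite{ca4} obtains them in the first place---and then handle the two signs $\varepsilon = \pm 1$ by explicit case analysis.
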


As for \eqref{eq:C-C-opposite}, we prove it together with the following proposition
that provides a recurrence relation for the matrices $C_t^{B;t_0}$ ``at the opposite end."
We still assume that every matrix of the form $C = C_t^{B;t_0}$ satisfies \eqref{eq:C-sign-coherence}
or equivalently \eqref{eq:C-sign-coherence-matrix-form}.

\begin{proposition}
\label{pr:C-mutation-left-end}
Suppose $t_0 \overunder{k}{} t_1$ in $\TT_n$, and let $B_1 = \mu_k(B)$.
For $t \in \TT_n$, abbreviate $C_t = C_t^{B;t_0}$,  and
$C'_t = C_{t}^{B_1;t_1}$.
Then  we have
\begin{equation}
\label{eq:C'-thru-C-left-end}
C'_t = (J_k + [-\varepsilon_k(C_{t_0}^{-B_t;t}) B]_+^{k \bullet}) C_t  \ .
\end{equation}
\end{proposition}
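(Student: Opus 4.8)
The plan is to read \eqref{eq:C'-thru-C-left-end} as the ``root-mutation'' counterpart of the ``vertex-mutation'' recurrence of Proposition~\ref{pr:G-C-mutation-right-end}, and to pass between the two by means of the opposite-end duality \eqref{eq:C-C-opposite}. Concretely, I would apply \eqref{eq:C-C-opposite} to the single vertex $t$ with respect to \emph{both} roots $t_0$ and $t_1$. Since $t_0 \overunder{k}{} t_1$ lie on one exchange matrix pattern (with $B_{t_1}=\mu_k(B)=B_1$), the matrix $B_t$ at $t$ does not depend on which of $t_0,t_1$ is chosen as root, so \eqref{eq:C-C-opposite} gives $C_t=(C_{t_0}^{-B_t;t})^{-1}$ and $C'_t=(C_{t_1}^{-B_t;t})^{-1}$. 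Thus the problem reduces to comparing $C_{t_0}^{-B_t;t}$ and $C_{t_1}^{-B_t;t}$, which share the single root $t$ and the single initial matrix $-B_t$ and whose vertices $t_0,t_1$ are adjacent. This is exactly the setting of Proposition~\ref{pr:G-C-mutation-right-end}.

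Applying that proposition with root $t$, moving from $t_0$ to $t_1$ along the edge $k$, and using that matrix mutation commutes with negation (so that the exchange matrix at $t_0$ in the pattern rooted at $t$ with initial matrix $-B_t$ is $-B$), I obtain
\[
C_{t_1}^{-B_t;t}=C_{t_0}^{-B_t;t}\,\bigl(J_k+[-\varepsilon_k(C_{t_0}^{-B_t;t})\,B]_+^{k\bullet}\bigr).
\]
The decisive algebraic point is that the factor $M:=J_k+N$, with $N:=[-\varepsilon_k(C_{t_0}^{-B_t;t})\,B]_+^{k\bullet}$ supported on the $k$-th row, is an involution: one has $N_{kk}=[\mp b_{kk}]_+=0$ because $B$ is skew-symmetrizable, whence $N^2=0$, $J_kN=-N$, and $NJ_k=N$, so $M^2=J_k^2+J_kN+NJ_k+N^2=I$. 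Inverting the displayed identity and substituting then yields $C'_t=(C_{t_1}^{-B_t;t})^{-1}=M^{-1}(C_{t_0}^{-B_t;t})^{-1}=M\,C_t$, which is precisely \eqref{eq:C'-thru-C-left-end}.

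As the text signals, \eqref{eq:C-C-opposite} is not available in advance but is proved together with this proposition, so to avoid circularity I would organize everything as a single induction on the distance $d(t_0,t)$. The base case $t=t_0$ is a direct check: using that both $C_{t_0}^{B;t_0}$ and $C_{t_0}^{-B;t_0}$ equal $I$, that $\varepsilon_k(I)=+1$, and \eqref{eq:matrix-mutation} for the $k$-th row of $B_1=\mu_k(B)$, both sides of \eqref{eq:C'-thru-C-left-end} collapse to $J_k+[-B]_+^{k\bullet}$. For the inductive step I would push the common vertex one edge toward $t_0$: choosing $s \overunder{m}{} t$ with $d(t_0,s)=d(t_0,t)-1$ and applying Proposition~\ref{pr:G-C-mutation-right-end} to \emph{both} roots $t_0$ and $t_1$, the identity at $t$ is reduced to the same identity at $s$ (the inductive hypothesis), the two vertex-mutation factors being governed by the single matrix $B_s$.

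The reconciliation at the inductive step forces two sign identities, and this is where I expect the main difficulty to lie, and where the hypothesis \eqref{eq:C-sign-coherence-matrix-form} is genuinely used. One must show that the vertex-mutation sign $\varepsilon_m(C_s^{B;t_0})$ coincides with $\varepsilon_m(C_s^{B_1;t_1})$, and that the root-mutation sign $\varepsilon_k(C_{t_0}^{-B_t;t})$ coincides with $\varepsilon_k(C_{t_0}^{-B_s;s})$; the latter is itself an instance of \eqref{eq:C'-thru-C-left-end} at distance $d(t_0,t)-1$ for the negated exchange matrix pattern. Each such identity asserts that left multiplication by an involution altering a single row preserves the sign of a given column, which is immediate from \eqref{eq:C-sign-coherence-matrix-form} whenever that column has a nonzero entry off the altered row; the degenerate columns supported entirely on the altered row are the delicate case and must be dispatched separately. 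Once these sign compatibilities are in place, the involution identity $M^2=I$ closes the induction and yields both \eqref{eq:C-C-opposite} and \eqref{eq:C'-thru-C-left-end} simultaneously.
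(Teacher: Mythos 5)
Your skeleton reproduces the paper's own: the paper likewise proves \eqref{eq:C'-thru-C-left-end} and \eqref{eq:C-C-opposite} by a simultaneous induction on $d(t_0,t)$, uses the involution property $(J_k+B^{k\bullet})^{-1}=J_k+B^{k\bullet}$ from \eqref{eq:J+B-ell-transpose-inverse} to normalize the position of the new root, runs the inductive step by applying Proposition~\ref{pr:G-C-mutation-right-end} at both roots, and reduces everything to precisely your two sign identities (the paper's \eqref{eq:only-kth-row} and \eqref{eq:two-epsilons-case1}); indeed your first two paragraphs are, read backwards, the paper's proof that $(I_d)$ and $(II_d)$ imply $(I_{d+1})$. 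The genuine gap is the case you defer: when the $m$-th column of $C_s^{B;t_0}$ is supported entirely on the $k$-th row (the paper's Case~2). This is not a boundary case that a routine separate check can dispatch; it is the crux of the entire argument, and your proposal contains no idea for handling it.

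Concretely, in that degenerate case the two sign identities you need are \emph{false}, not merely harder to verify: if the $m$-th column of $C_s^{B;t_0}$ is $\varepsilon\ee_k$, then the row-$k$ involution sends it to $-\varepsilon\ee_k$, so $\varepsilon_m(C_s^{B_1;t_1})=-\varepsilon_m(C_s^{B;t_0})$, and similarly $\varepsilon_k(C_{t_0}^{-B_t;t})=-\varepsilon_k(C_{t_0}^{-B_s;s})$; one must instead prove directly that the two products built with opposite signs coincide. After cancelling via \eqref{eq:J+B-ell-transpose-inverse} and \eqref{eq:b+identity}, this amounts to the extra identity \eqref{eq:BC=CBt} (in your notation, $B^{k\bullet}C_s^{B;t_0}=C_s^{B;t_0}B_s^{m\bullet}$), and the paper's proof of it requires substantial input absent from your plan: the inductive availability of \eqref{eq:C-C-opposite}, so that $C_s^{B;t_0}$ and $C_{t_0}^{-B_s;s}$ are mutually inverse \emph{integer} matrices --- which is also what forces, via \eqref{eq:inverses-columns-agree-one-entry}, the degeneracies in your identities (a) and (b) to occur simultaneously and pins the degenerate column down to $\pm\ee_k$ rather than an arbitrary integer multiple of $\ee_k$ --- together with the already-established duality \eqref{eq:G-C-inverse}, the relation $GB_t=BC$ of \eqref{eq:GBt=BC}, and an integrality argument with the skew-symmetrizer $D$ showing $d_k=d_m$, hence that the relevant row of $G$ is $\pm\ee_m^T$. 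This also shows that an induction carrying \eqref{eq:C'-thru-C-left-end} alone, as sketched in your last two paragraphs, cannot close: \eqref{eq:C-C-opposite} (and \eqref{eq:G-C-inverse}) must be available at the current distance, which is exactly why the paper interleaves the statements $(I_d)$ and $(II_d)$.
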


\smallskip

Proposition~\ref{pr:G-C-mutation-right-end} and the identity~\eqref{eq:G-C-inverse}
will be proved in Section~\ref{sec:G-C-inverse-proofs}, while
Proposition~\ref{pr:C-mutation-left-end} and \eqref{eq:C-C-opposite}
will be proved in Section~\ref{sec:C-C-inverse-proofs}.
In the last section we give some corollaries.

\section{Proofs of Proposition~\ref{pr:G-C-mutation-right-end} and identity~\eqref{eq:G-C-inverse}}
\label{sec:G-C-inverse-proofs}

\begin{proof}[Proof of Proposition~\ref{pr:G-C-mutation-right-end}]
We start with the following identity involving the function $[b]_+ = \max(b,0)$:
for any real numbers $b$ and $c$, and a sign $\varepsilon = \pm 1$, we have
\begin{equation}
\label{eq:c+b+identity}
[c]_+ [b]_+ - [-c]_+ [-b]_+ = c [\varepsilon b]_+ + b [-\varepsilon c]_+ \ .
\end{equation}
This is a consequence of an obvious identity
\begin{equation}
\label{eq:b+identity}
[b]_+ - [-b]_+ = b \ .
\end{equation}
Indeed, we have
$$c [b]_+ + b [- c]_+ =
([c]_+ - [-c]_+) [b]_+ + ([b]_+ - [-b]_+) [-c]_+ =
[c]_+ [b]_+ - [-c]_+ [-b]_+ \ ,$$
proving \eqref{eq:c+b+identity} for $\varepsilon = 1$; the case $\varepsilon = -1$
is proved similarly.

The first equality in \eqref{eq:C'G'-thru-CG-right-end} was essentially shown in \cite[(3.2)]{ca3}
(in a different notation).
For the convenience of the reader, we reproduce the argument here.
Recall from  \cite[Remarks~3.2, 3.14]{ca4} that the relationship between
$C = C_t^{B;t_0}$ and $C' = C_{t'}^{B;t_0}$ can be described as follows:
if $\tilde B_t$ is a $2n \times n$ integer matrix with the top $n \times n$ block $B_t$ and the bottom block
$C_t^{B;t_0}$, then $\tilde B_{t'}$ is obtained from $\tilde B_t$ by
the matrix mutation $\mu_\ell$ given by the same formula as in \eqref{eq:matrix-mutation}.
Thus, we get
\begin{equation}
\label{eq:C'-thru-C-matrix-mutation}
c'_{ij} =
\begin{cases}
-c_{ij} & \text{if $j=\ell$;} \\[.05in]
c_{ij} + [c_{i \ell}]_+ [b_{\ell j;t}]_+ - [-c_{i \ell}]_+ [-b_{\ell j;t}]_+
 & \text{otherwise.}
\end{cases}
\end{equation}
Using \eqref{eq:c+b+identity} and rewriting the resulting formula in the matrix form, we get
\begin{equation}
\label{eq:C'-thru-C-matrix-form-general}
C' = C (J_\ell + [\varepsilon B_t]_+^{\ell \bullet})  +  [- \varepsilon C]_+^{\bullet \ell} B_t
\end{equation}
for any choice of the sign $\varepsilon$.
Setting $\varepsilon = \varepsilon_\ell(C)$ and remembering \eqref{eq:C-sign-coherence-matrix-form}, we see
that the second term in \eqref{eq:C'-thru-C-matrix-form-general} disappears, implying the desired equality.

To prove the second equality in \eqref{eq:C'G'-thru-CG-right-end},
we rewrite \cite[(6.12)-(6.13)]{ca4} in the matrix form as follows:
\begin{equation}
\label{eq:G'-thru-G-matrix-form-general}
G' = G (J_\ell + [- \varepsilon B_t]_+^{\bullet \ell})   - B [- \varepsilon C]_+^{\bullet \ell}
\end{equation}
(again for any choice of the sign $\varepsilon$).
Again setting $\varepsilon = \varepsilon_\ell(C)$ makes the second term in \eqref{eq:G'-thru-G-matrix-form-general}
disappear, completing the proof of Proposition~\ref{pr:G-C-mutation-right-end}.
\end{proof}

\begin{proof}[Proof of identity~\eqref{eq:G-C-inverse}]
We prove \eqref{eq:G-C-inverse} by induction on the distance between $t$ and $t_0$ in the tree $\TT_n$.
For $t=t_0$, the claim follows from \eqref{eq:C-G-initial}.
It remains to show that if \eqref{eq:G-C-inverse} holds for some $t \in \TT_n$ then it also holds
for $t' \in \TT_n$ such that $t \overunder{\ell}{} t'$.
This implication is a consequence of \eqref{eq:C'G'-thru-CG-right-end} combined with the following
three simple observations:
\begin{itemize}
\item
For every integer square matrix~$B$ and an index~$\ell$ such that
$b_{\ell \ell} = 0$, we have
\begin{equation}
\label{eq:J+B-ell-transpose-inverse}
(J_\ell + B^{\bullet \ell})^T =  J_\ell + {(B^T)}^{\ell \bullet}, \quad
(J_\ell + B^{\ell \bullet})^{-1} = J_\ell + B^{\ell \bullet} \ .
\end{equation}
\item
The replacement of the initial exchange matrix $B_{t_0} = B$ with
$- B^T$ leads to the replacement of each $B_t$ with $- B_t^T$, and we have
\begin{equation}
\label{eq:replacing-B-by--BT}
C_t^{-B^T;t_0} = D C_t^{B;t_0} D^{-1},
\end{equation}
where $D$ is a diagonal $n \times n$ matrix with positive integer entries such that
$- B^T = D B D^{-1}$.
\item
In particular, we have $\varepsilon_\ell(C_t^{-B^T;t_0}) = \varepsilon_\ell(C_t^{B;t_0})$
for every matrix index~$\ell$.
\end{itemize}
\end{proof}

\begin{remark}
As in Proposition~\ref{pr:G-C-mutation-right-end}, let us abbreviate
$C = C_t^{B;t_0}$, and $G = G_t^{B;t_0}$.
In the  equality \eqref{eq:G'-thru-G-matrix-form-general},
the fact that two choices of the sign give the same answer implies the following identity
for $G$ (shown already in \cite[(6.14)]{ca4}):
\begin{equation}
\label{eq:GBt=BC}
G B_t  = B C \ .
\end{equation}
Combining this identity with \eqref{eq:G-C-inverse} and \eqref{eq:replacing-B-by--BT}, we obtain
the following:
\begin{equation}
\label{eq:Bt-thru-B-and-C}
D B_t  = C^T \cdot DB \cdot C \ ,
\end{equation}
where the diagonal matrix~$D$ is as in \eqref{eq:replacing-B-by--BT}.
This shows that the exchange matrix~$B_t$ at every vertex $t \in \TT_n$ (that is, the top part of the $2n \times n$
matrix~$\tilde B_t$) is determined by the initial exchange matrix~$B$ and the matrix~$C$ which is the bottom part
of~$\tilde B_t$.
\end{remark}

\section{Proofs of Proposition~\ref{pr:C-mutation-left-end} and identity~\eqref{eq:C-C-opposite}}
\label{sec:C-C-inverse-proofs}

We prove \eqref{eq:C'-thru-C-left-end} and \eqref{eq:C-C-opposite} by a simultaneous induction.
To prepare the ground for it, we note that, under the notation and assumptions of
Proposition~\ref{pr:C-mutation-left-end}, we have
$B_1^{k \bullet} = - B^{k \bullet}$, and
$(C_{t_1}^{-B_t;t})^{\bullet k} = - (C_{t_0}^{-B_t;t})^{\bullet k}$, hence
$\varepsilon_k(C_{t_1}^{-B_t;t}) = -\varepsilon_k(C_{t_0}^{-B_t;t})$.
In view of the second equality in \eqref{eq:J+B-ell-transpose-inverse}, this allows us to interchange
$t_0$ and $t_1$ in \eqref{eq:C'-thru-C-left-end}.
Thus, it suffices to prove \eqref{eq:C'-thru-C-left-end} under the assumption that
\begin{equation}
\label{eq:t0-between-t-t1}
\text{$t_0$ belongs to the (unique) path between $t$ and $t_1$ in $\TT_n$.}
\end{equation}
In other words, \eqref{eq:t0-between-t-t1} means that $d(t,t_1) = d(t,t_0) + 1$, where
$d(t,t_0)$ denotes the distance between $t$ and $t_0$ in $\TT_n$.

For $d \geq 0$, we denote by $(I_d)$ and $(II_d)$ the following two statements:
\begin{enumerate}
\item[$(I_d)$]
The equality \eqref{eq:C-C-opposite} holds for $d(t,t_0) = d$.
\smallskip
\item[$(II_d)$]
The equality \eqref{eq:C'-thru-C-left-end} holds
under the assumption \eqref{eq:t0-between-t-t1} whenever $d(t,t_0) = d$.
\end{enumerate}
We will prove both $(I_d)$ and $(II_d)$ simultaneously by induction on~$d$.

For $d=0$ we have $t = t_0$, and both \eqref{eq:C-C-opposite} and \eqref{eq:C'-thru-C-left-end}
are immediate from the definitions.

It remains to prove the implications $(I_d) \ \& \ (II_d) \Longrightarrow (I_{d+1})$ and
$(I_d) \ \& (II_d)\ \Longrightarrow (II_{d+1})$.

\begin{proof}[Proof of the implication $(I_d) \ \& \ (II_d) \Longrightarrow (I_{d+1})$]
We need to show that if \eqref{eq:C-C-opposite} and \eqref{eq:C'-thru-C-left-end}
hold  for some $t, t_0$, and $t_1$ satisfying \eqref{eq:t0-between-t-t1}, then
\eqref{eq:C-C-opposite} also holds if $t_0$ is replaced with $t_1$,
and $B$ is replaced by $B_1 = \mu_k(B)$
(note that this replacement leaves $B_t$ unchanged by the definition of an exchange matrix pattern).
Let us abbreviate $C = C_t^{B;t_0}$.
To prove the desired equality
$$C_t^{B_1;t_1} = (C_{t_1}^{-B_t;t})^{-1}$$
we use \eqref{eq:C'-thru-C-left-end}, the second equality in \eqref{eq:J+B-ell-transpose-inverse},
the inductive assumption $C^{-1} = C_{t_0}^{-B_t;t}$, and the first equality in
\eqref{eq:C'G'-thru-CG-right-end} to get
\begin{align*}
(C_t^{B_1;t_1})^{-1} &= ((J_k + [-\varepsilon_k(C_{t_0}^{-B_t;t}) B]_+^{k \bullet})\  C)^{-1}\\
&= C^{-1} (J_k + [-\varepsilon_k(C_{t_0}^{-B_t;t}) B]_+^{k \bullet})\\
&= C_{t_0}^{-B_t;t} (J_k + [\varepsilon_k(C_{t_0}^{-B_t;t}) \cdot (-B)]_+^{k \bullet})\\
& = C_{t_1}^{-B_t;t}\
\end{align*}
(to see that the last equality is indeed an instance of \eqref{eq:C'G'-thru-CG-right-end},
note that the matrix mutation commutes with the operation $B \mapsto -B$, thus the exchange matrix pattern
that assigns $-B_t$ to $t$, also assigns $-B$ to $t_0$).  \end{proof}


\begin{proof}[Proof of the implication $(I_d) \ \& \ (II_d) \Longrightarrow (II_{d+1})$]
This proof is more involved than the previous one.
Let us first summarize our assumptions, and what has to be proven.

Suppose $d(t_0, t) = d$ in $\TT_n$, and let $t_1, t' \in \TT_n$ be such that $t_0 \overunder{k}{} t_1$,
$t \overunder{\ell}{} t'$, and $t$ and $t_0$ belong to the unique path between $t'$ and $t_1$.
Thus we have $d(t_1, t) = d(t_0, t') = d+1$, and $d(t_1, t') = d+2$.
The assumption $(I_d)$ is just the equality \eqref{eq:C-C-opposite}.
Since we have already shown that $(I_d)$ and $(II_d)$ imply $(I_{d+1})$, we can also assume the equalities
\begin{equation}
\label{eq:C-C-opposite-distance-d+1}
C_t^{B_1;t_1} = (C_{t_1}^{-B_t;t})^{-1}, \quad
C_{t'}^{B;t_0} = (C_{t_0}^{-B_{t'};t'})^{-1} \ .
\end{equation}
Now the assumption $(II_d)$ gives us the equalities
\begin{align}
\label{eq:C'-thru-C-left-end-d}
&C_{t}^{B_1;t_1} = (J_k + [-\varepsilon_k(C_{t_0}^{-B_t;t}) B]_+^{k \bullet}) C_t^{B;t_0},\\
\nonumber
&C_{t_0}^{-B_{t'};t'} = (J_\ell + [\varepsilon_{\ell}(C_{t}^{B;t_0}) B_t]_+^{\ell \bullet}) C_{t_0}^{-B_t;t},
\end{align}
while our goal is to prove that
\begin{equation}
\label{eq:C'-thru-C-left-end-d+1}
C_{t'}^{B_1;t_1} = (J_k + [-\varepsilon_k(C_{t_0}^{-B_{t'};t'}) B]_+^{k \bullet}) C_{t'}^{B;t_0}\ .
\end{equation}

To prove \eqref{eq:C'-thru-C-left-end-d+1}, we invoke the following
equalities which are instances of \eqref{eq:C'G'-thru-CG-right-end}:
\begin{equation}
\label{eq:C'-thru-C-right-end-two-times}
C_{t'}^{B;t_0} = C_{t}^{B;t_0} (J_\ell + [\varepsilon_\ell(C_{t}^{B;t_0}) B_t]_+^{\ell \bullet}), \quad
C_{t'}^{B_1;t_1} = C_{t}^{B_1;t_1} (J_\ell + [\varepsilon_\ell(C_{t}^{B_1;t_1}) B_t]_+^{\ell \bullet}) \ .
\end{equation}

The following observation is immediate from the first equality in \eqref{eq:C'-thru-C-left-end-d}:
\begin{equation}
\label{eq:only-kth-row}
\text{The transformation
$C_t^{B;t_0} \mapsto C_{t}^{B_1;t_1}$ affects only the entries in the $k$-th row.}
\end{equation}

Now we consider separately the following two mutually exclusive cases:

\smallskip

{\bf Case~1:} the matrix $C_t^{B;t_0}$ has a non-zero entry $(i, \ell)$ for some $i \neq k$.

\smallskip

{\bf Case~2:} the only non-zero entry in the $\ell$-th column of $C_t^{B;t_0}$
is the $(k, \ell)$ entry; equivalently, the $\ell$th column of~$C_t^{B;t_0}$ is of the form
$\varepsilon \ee_k$, where $\ee_1, \dots, \ee_n$ is the standard basis of $\ZZ^n$.
Note that in this case the coefficient $\varepsilon$ is equal to $\pm 1$, since both
$C_t^{B;t_0}$ and its inverse $C_{t_0}^{-B_t;t}$ are integer matrices.

\smallskip

First we deal with Case~1.
In view of \eqref{eq:only-kth-row}, we have $\varepsilon_\ell(C_{t}^{B_1;t_1}) = \varepsilon_\ell(C_{t}^{B;t_0})$
(here we use the assumption \eqref{eq:C-sign-coherence-matrix-form}!).
Combining \eqref{eq:C'-thru-C-right-end-two-times} and \eqref{eq:C'-thru-C-left-end-d}, we see that the desired
equality \eqref{eq:C'-thru-C-left-end-d+1} follows from the equality
\begin{equation}
\label{eq:two-epsilons-case1}
\varepsilon_k(C_{t_0}^{-B_{t'};t'}) =  \varepsilon_k(C_{t_0}^{-B_t;t}) \ .
\end{equation}
Applying the same argument as above to the transformation $C_{t_0}^{-B_t;t} \mapsto C_{t_0}^{-B_{t'};t'}$
given by the second equality in \eqref{eq:C'-thru-C-left-end-d}, we see that it is enough to show that
$C_{t_0}^{-B_t;t}$ satisfies the analog of Case~1, namely has a non-zero entry $(i, k)$ for some $i \neq \ell$.
Now recall that by our assumption $(I_d)$, the matrices $C_t^{B;t_0}$ and $C_{t_0}^{-B_t;t}$ are inverses of each other.
Thus, it is enough to show the following statement from linear algebra:
\begin{eqnarray}
\label{eq:inverses-columns-agree}
&\text{For any indices $k$ and $\ell$, an invertible matrix~$C$ has a non-zero entry}\\
\nonumber
&\text{$(i, \ell)$ for some $i \neq k$
if and only if $C^{-1}$ has a non-zero entry $(i, k)$ for some $i \neq \ell$.}
\end{eqnarray}
The easiest way to prove \eqref{eq:inverses-columns-agree} is to observe
that it becomes obvious after replacing the equivalent statements in question by their negations:
\begin{eqnarray}
\label{eq:inverses-columns-agree-one-entry}
&\text{For any $k$ and $\ell$, the $\ell$th column of~$C$ is of the form
$\varepsilon  \ee_k$}\\
\nonumber
&\text{with $\varepsilon = \pm 1$ if and only if the $k$th column of~$C^{-1}$ is
$\varepsilon  \ee_\ell$.}
\end{eqnarray}

This completes the proof of \eqref{eq:C'-thru-C-left-end-d+1} in Case~1.

\smallskip

Now assume that we are in Case~2, that is, in view of \eqref{eq:inverses-columns-agree-one-entry},
the $\ell$th column of~$C_t^{B;t_0}$ is $\varepsilon \ee_k$, while the
$k$th column of~$(C_t^{B;t_0})^{-1} = C_{t_0}^{-B_t;t}$ is $\varepsilon \ee_\ell$ for some $\varepsilon = \pm 1$.
In particular, we have
$$\varepsilon_\ell(C_t^{B;t_0}) = \varepsilon_k(C_{t_0}^{-B_t;t}) = \varepsilon \ .$$
Using the two equalities in \eqref{eq:C'-thru-C-left-end-d}, we conclude that
the $\ell$th column of~$C_{t}^{B_1;t_1}$ is $-\varepsilon \ee_k$, while the
$k$th column of~$C_{t_0}^{-B_{t'};t'}$ is $- \varepsilon \ee_\ell$, hence
$$\varepsilon_\ell(C_{t}^{B_1;t_1}) = \varepsilon_k(C_{t_0}^{-B_{t'};t'}) = - \varepsilon \ .$$

Combining \eqref{eq:C'-thru-C-right-end-two-times} and \eqref{eq:C'-thru-C-left-end-d}, we can rewrite the desired
equality \eqref{eq:C'-thru-C-left-end-d+1} as follows:
$$
(J_k + [-\varepsilon B]_+^{k \bullet}) C_t^{B;t_0} (J_\ell + [- \varepsilon B_t]_+^{\ell \bullet})
=
(J_k + [\varepsilon B]_+^{k \bullet})C_{t}^{B;t_0} (J_\ell + [\varepsilon B_t]_+^{\ell \bullet})
 \ ,$$
or equivalently (in view of \eqref{eq:J+B-ell-transpose-inverse}) as
$$(J_k + [\varepsilon B]_+^{k \bullet})
(J_k + [-\varepsilon B]_+^{k \bullet})C_{t}^{B;t_0} =
 C_t^{B;t_0} (J_\ell + [\varepsilon B_t]_+^{\ell \bullet})(J_\ell + [-\varepsilon B_t]_+^{\ell \bullet}) \ .$$
Performing the matrix multiplication and remembering \eqref{eq:b+identity}, we can simplify the last equality to
$$(I + \varepsilon B^{k \bullet}) C_{t}^{B;t_0} =
 C_t^{B;t_0} (I + \varepsilon B_t^{\ell \bullet}) \ ,$$
 or even further to
\begin{equation}
\label{eq:BC=CBt}
B^{k \bullet} C_{t}^{B;t_0} =  C_t^{B;t_0} B_t^{\ell \bullet} \ .
\end{equation}

To prove \eqref{eq:BC=CBt} we recall the identity \eqref{eq:G-C-inverse}, and in particular the matrix
$G_{t}^{B;t_0}$ that appears there.
To simplify the notation, we abbreviate
$$G_{t}^{B;t_0} = G, \quad C_{t}^{B;t_0} = C \ ;$$
as shown in the proof of \eqref{eq:G-C-inverse} given above, it
can be rewritten as follows:
\begin{align}
\label{eq:GT1}
G^T = (C_t^{-B^T;t_0})^{-1} = D C^{-1} D^{-1} \ ,
\end{align}
where $D$ is the diagonal matrix with positive diagonal entries such that
$- B^T = D B D^{-1}$.
In particular, the $k$th column of~$G^T$ is equal to $d_\ell d_k^{-1} \varepsilon \ee_\ell$.
However, we know that both $G^T$ and its inverse $C_t^{-B^T;t_0}$ are integer matrices.
Therefore, $d_\ell = d_k$, and we have
\begin{equation}
\label{eq:G-kth-row}
\text{the $k$th row of $G$ is equal to $\varepsilon \ee_\ell^T$.}
\end{equation}
Now we recall the  equality \eqref{eq:GBt=BC}.
Computing the entries in the $k$th row on both sides of \eqref{eq:GBt=BC} and using \eqref{eq:G-kth-row},
we get, for any $j = 1, \dots, n$:
$$\varepsilon b_{\ell j; t} = \sum_p b_{kp} c_{pj} \ .$$
Rewriting this in the matrix form yields \eqref{eq:BC=CBt}, thus completing the proofs of
Proposition~\ref{pr:C-mutation-left-end} and identity \eqref{eq:C-C-opposite}.
\end{proof}

\section{Some corollaries}
\label{sec:corollaries}

In this section we show that the assumption \eqref{eq:C-sign-coherence} and the identities \eqref{eq:G-C-inverse} and \eqref{eq:C-C-opposite}
imply most of the conjectures made in \cite{ca4} and recast in \cite{dwz2}, namely
Conjectures~1.1 - 1.4 and 1.6 from \cite{dwz2}.
For the convenience of the reader we reproduce their statements.

\begin{conjecture}
\label{con:F-CT-1-2}
\begin{enumerate}
\item[(i)]
Each polynomial $F_{j;t}^{B;t_0}$ has constant term~$1$.
\item[(ii)]
Each polynomial $F_{j;t}^{B;t_0}$ has a unique
monomial of maximal degree.
Furthermore, this monomial has coefficient~$1$, and it is
divisible by all the other occurring monomials.
\item[(iii)]
For every~$t \in \TT_n$,
the vectors $\gg_{1;t}^{B;t_0}, \dots, \gg_{n;t}^{B;t_0}$ are
sign-coherent, i.e., for any $i = 1, \dots, n$, the $i$-th components of all
these vectors are either all nonnegative, or all nonpositive.
\item[(iv)]
For every~$t \in \TT_n$, the vectors
$\gg_{1;t}^{B;t_0}, \dots, \gg_{n;t}^{B;t_0}$
form a $\ZZ$-basis of the lattice~$\ZZ^n$.
\item[(v)]
Let $t_0 \overunder{k}{} t_1$ be two adjacent vertices in~$\TT_n$,
and let $B' = \mu_k(B)$.
Then, for any $t \in \TT_n$ and $j = 1, \dots, n$,
the $\gg$-vectors $\gg_{j;t}^{B;t_0} = (g_1, \dots, g_n)$
and $\gg_{j;t}^{B';t_1} = (g'_1, \dots, g'_n)$ are related as
follows:
\begin{equation}
\label{eq:Langlands-dual-trop}
g'_i =
\begin{cases}
-g_k  & \text{if $i = k$};\\[.05in]
g_i + [b_{ik}]_+ g_k
  - b_{ik} \min(g_k,0)
 & \text{if $i \neq k$}.
\end{cases}
\end{equation}
\end{enumerate}
\end{conjecture}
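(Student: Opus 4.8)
The plan is to deduce parts (i)--(v) from the two identities \eqref{eq:G-C-inverse} and \eqref{eq:C-C-opposite} together with the sign-coherence assumption \eqref{eq:C-sign-coherence}, routing most of the work through the combined identity \eqref{eq:G-C-opposite}. First I would record \eqref{eq:G-C-opposite} itself: applying \eqref{eq:C-C-opposite} to the initial matrix $-B^T$ and using \eqref{eq:replacing-B-by--BT} (which replaces each $B_t$ by $-B_t^T$, so that $-(-B^T)_t = B_t^T$) gives $(C_t^{-B^T;t_0})^{-1} = C_{t_0}^{B_t^T;t}$, and feeding this into \eqref{eq:G-C-inverse} yields $(G_t^{B;t_0})^T = C_{t_0}^{B_t^T;t}$. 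Part (i) needs no new argument: it is precisely \eqref{eq:F-constant-term}, which by \cite[Proposition~5.6]{ca4} is equivalent to the standing assumption \eqref{eq:C-sign-coherence}.

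Parts (iii) and (iv) then drop out of these matrix identities. For (iii), the $i$-th row of $G_t^{B;t_0}$ is the $i$-th column of its transpose, hence, by \eqref{eq:G-C-opposite}, the $\cc$-vector $\cc_{i;t_0}^{B_t^T;t}$ of a genuine seed pattern (rooted at $t$, initial matrix $B_t^T$); sign-coherence \eqref{eq:C-sign-coherence} of that $\cc$-vector is exactly the assertion that the $i$-th components of $\gg_{1;t}^{B;t_0}, \dots, \gg_{n;t}^{B;t_0}$ share a sign. For (iv), \eqref{eq:G-C-inverse} exhibits $(G_t^{B;t_0})^T$ as the inverse of the integer matrix $C_t^{-B^T;t_0}$; since both this matrix and its inverse have integer entries (Proposition~\ref{pr:F-g-c}), their determinants are reciprocal integers and hence $\pm1$, so $\det G_t^{B;t_0} = \pm1$ and the $\gg$-vectors form a $\ZZ$-basis of $\ZZ^n$.

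For (v) I would transpose the right-end recurrence \eqref{eq:C'G'-thru-CG-right-end}. Apply its first equality to the seed pattern rooted at $t$ with initial matrix $B_t^T$, along the edge $t_0 \overunder{k}{} t_1$; since matrix mutation commutes with transposition and the pattern rooted at $t$ with $B_t$ returns $B$ at $t_0$, the exchange matrix of this pattern at $t_0$ is $B^T$, giving $C_{t_1}^{B_t^T;t} = C_{t_0}^{B_t^T;t}(J_k + [\varepsilon B^T]_+^{k\bullet})$ with $\varepsilon = \varepsilon_k(C_{t_0}^{B_t^T;t})$. Transposing and using \eqref{eq:G-C-opposite} and \eqref{eq:J+B-ell-transpose-inverse} turns this into the left-end $\gg$-vector recurrence $G_t^{B';t_1} = (J_k + [\varepsilon B]_+^{\bullet k})\, G_t^{B;t_0}$, i.e. $\gg_{j;t}^{B';t_1} = (J_k + [\varepsilon B]_+^{\bullet k})\, \gg_{j;t}^{B;t_0}$ for $B' = \mu_k(B)$. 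Reading off components yields $g'_k = -g_k$ and $g'_i = g_i + [\varepsilon b_{ik}]_+\, g_k$ for $i \neq k$. By (iii) the sign $\varepsilon$, being that of the $k$-th column of $(G_t^{B;t_0})^T$, coincides with the common sign of $g_k = g_{kj;t}$; splitting into the cases $g_k \geq 0$ and $g_k < 0$ and using $[-b]_+ = [b]_+ - b$ (that is, \eqref{eq:b+identity}) converts $[\varepsilon b_{ik}]_+\, g_k$ into $[b_{ik}]_+ g_k - b_{ik}\min(g_k,0)$, which is exactly \eqref{eq:Langlands-dual-trop}.

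The remaining part (ii) is where I expect the real difficulty, since it is the only one that genuinely involves the polynomials $F_{j;t}$ rather than only the matrices $C_t$ and $G_t$. Here I would use the $\gg$-vector expansion \eqref{eq:xjt}: specializing to principal coefficients and invoking (i), each cluster variable becomes $x_{j;t} = \sum_m c_m\, \yy^m\, \xx^{\gg_{j;t} + B m}$, summed over the monomials $u^m$ of $F_{j;t}$ with coefficients $c_m$, so the maximal-degree monomial of $F_{j;t}$ is encoded by a distinguished term of $x_{j;t}$. The plan is to identify its exponent with $\gg$-vector data of the opposite pattern via the duality \eqref{eq:G-C-opposite}, and then to deduce that this monomial is unique, dominates all others, and has coefficient $1$ from the sign-coherence (iii) and basis property (iv) established above. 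Making this identification precise — in particular proving dominance throughout the Newton polytope and the unit coefficient, rather than merely componentwise maximality — is the crux and the main obstacle of the whole argument.
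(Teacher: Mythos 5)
Your treatment of parts (i), (iii), (iv), and (v) is correct and essentially coincides with the paper's own argument. Parts (iii) and (iv) are read off from \eqref{eq:G-C-opposite} and \eqref{eq:G-C-inverse} exactly as the paper does. For (v) you run the paper's computation in the opposite direction: you start from the $\cc$-recurrence \eqref{eq:C'G'-thru-CG-right-end} applied to the pattern rooted at $t$ with initial matrix $B_t^T$, transpose it via \eqref{eq:G-C-opposite} and \eqref{eq:J+B-ell-transpose-inverse}, and then do the sign bookkeeping componentwise, using that $\varepsilon = \varepsilon_k(C_{t_0}^{B_t^T;t})$ is the common sign of the numbers $g_{kj;t}$ (part (iii)); the paper instead rewrites \eqref{eq:Langlands-dual-trop} in the sign-independent matrix form \eqref{eq:Langlands-dual-trop-matrix} using \eqref{eq:c+b+identity} and reduces it to that same $\cc$-recurrence. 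The two computations are equivalent, and your case analysis is a valid substitute for the paper's use of the extra term $B[-\varepsilon G_t^{B;t_0}]_+^{k\bullet}$.

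The genuine gap is part (ii), which you explicitly leave unfinished. The Newton-polytope program you sketch --- extracting the maximal monomial of $F_{j;t}$ from the expansion \eqref{eq:xjt} and proving its dominance and unit coefficient via the duality \eqref{eq:G-C-opposite} --- is not needed, and it is precisely the hard road you correctly identify as an obstacle. What you are missing is that this work was already done in \cite{ca4}: Proposition~5.3 there shows that Conjecture~5.4 of \cite{ca4} (statement (i) here) and Conjecture~5.5 of \cite{ca4} (statement (ii) here) are \emph{equivalent} to each other, and Proposition~5.6 of \cite{ca4} shows that the sign-coherence condition \eqref{eq:C-sign-coherence} (appearing as condition (ii$'$) in that proof) implies both. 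So (ii) follows from the standing assumption \eqref{eq:C-sign-coherence} by the very same citation you already invoked for (i); no new argument involving the polynomials $F_{j;t}$ is required. As written, your proposal establishes only four of the five statements.
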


\begin{proposition}
\label{pr:c-coherence-implies-conjectures}
The assumption \eqref{eq:C-sign-coherence} implies all the statements in Conjecture~\ref{con:F-CT-1-2}.
\end{proposition}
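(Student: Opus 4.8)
The plan is to treat the five assertions separately: statement (i) is a citation, statements (iii), (iv), (v) follow directly from the tropical dualities of Theorem~\ref{th:main} (and their combination \eqref{eq:G-C-opposite}), and statement (ii) is the one genuinely new obstacle. Statement (i) is nothing but the reformulation \eqref{eq:F-constant-term} of the standing hypothesis \eqref{eq:C-sign-coherence}, so it is immediate from the equivalence \cite[Proposition~5.6]{ca4} already quoted in the introduction.

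For (iii) and (iv) the key observation is that \eqref{eq:G-C-opposite} exhibits the transpose of the $\gg$-matrix as a genuine $\cc$-matrix, $(G_t^{B;t_0})^T = C_{t_0}^{B_t^T;t}$. For (iii) I would argue that the columns of $C_{t_0}^{B_t^T;t}$ are $\cc$-vectors, hence sign-coherent by \eqref{eq:C-sign-coherence} applied to the seed pattern rooted at $t$ with initial matrix $B_t^T$ (legitimate, since the hypothesis is assumed for every skew-symmetrizable matrix); transposing, the columns of $(G_t^{B;t_0})^T$ are the rows of $G_t^{B;t_0}$, so each row of $G_t^{B;t_0}$ is sign-coherent, which is precisely the assertion that for each fixed $i$ the $i$-th components of $\gg_{1;t},\dots,\gg_{n;t}$ share a common sign. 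For (iv) I would instead use \eqref{eq:G-C-inverse}, where $(G_t^{B;t_0})^T$ is displayed as the inverse of the integer matrix $C_t^{-B^T;t_0}$; since $G_t^{B;t_0}$ is itself an integer matrix, two mutually inverse integer matrices each have determinant $\pm 1$, so $G_t^{B;t_0}$ is unimodular and its columns form a $\ZZ$-basis of $\ZZ^n$.

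For (v) I would transport the right-end $\cc$-mutation of Proposition~\ref{pr:G-C-mutation-right-end} through \eqref{eq:G-C-opposite}. Writing $(G_t^{B;t_0})^T = C_{t_0}^{B_t^T;t}$ and $(G_t^{B';t_1})^T = C_{t_1}^{B_t^T;t}$ (the two patterns coincide, since rooting at $t_1$ with $B'=\mu_k(B)=B_{t_1}$ reproduces the same exchange matrix pattern, so the matrix at $t$ is still $B_t$), the right-hand sides are $\cc$-matrices of one pattern at the adjacent vertices $t_0 \overunder{k}{} t_1$. The first equality in \eqref{eq:C'G'-thru-CG-right-end} then gives $C_{t_1}^{B_t^T;t} = C_{t_0}^{B_t^T;t}(J_k+[\varepsilon B^T]_+^{k\bullet})$ with $\varepsilon$ the common sign of the $k$-th row of $G_t^{B;t_0}$, where I use that matrix mutation commutes with transposition, so the exchange matrix at $t_0$ of the pattern rooted at $t$ with initial matrix $B_t^T$ is exactly $B^T$. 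Transposing and applying the transpose rule in \eqref{eq:J+B-ell-transpose-inverse} yields $G_t^{B';t_1} = (J_k+[\varepsilon B]_+^{\bullet k})\,G_t^{B;t_0}$, whence $g'_k = -g_k$ and $g'_i = g_i + [\varepsilon b_{ik}]_+\,g_k$ for $i\neq k$; a short check using \eqref{eq:b+identity} and $\varepsilon = \mathrm{sign}(g_k)$ shows $[\varepsilon b_{ik}]_+\,g_k = [b_{ik}]_+ g_k - b_{ik}\min(g_k,0)$, which is exactly \eqref{eq:Langlands-dual-trop}.

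The main obstacle is (ii), the unique-maximal-monomial property, which concerns the Newton polytopes of the $F_{j;t}$ rather than the matrices of $\cc$- or $\gg$-vectors. My plan is to establish a reciprocity of the form $F_{j;t}^{B;t_0}(u) = u^{\mathbf h_{j;t}}\,F_{j;t}^{-B;t_0}(u^{-1})$ for a suitable exponent vector $\mathbf h_{j;t}$. The guiding mechanism is the $Y$-seed duality $(\yy,B)\mapsto(\yy^{-1},-B)$, which intertwines with mutation and sends every $Y$-variable to its inverse; evaluated tropically it interchanges the $\min$- and $\max$-conventions, and should thereby convert the constant term of $F^{-B}$ into the top monomial of $F^{B}$. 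Granting such a reciprocity, statement (ii) for the $B$-pattern reduces to statement (i) for the $(-B)$-pattern: the constant-term-$1$ property of $F_{j;t}^{-B;t_0}$ (itself an instance of (i), valid because \eqref{eq:C-sign-coherence} is assumed for every matrix) forces $u^{\mathbf h_{j;t}}$ to be the unique monomial of maximal degree, with coefficient $1$ and divisible by every other monomial. The delicate step, which I expect to be the hardest, is pinning down the reciprocity with the correct sign-flipped pattern and exponent $\mathbf h_{j;t}$; the cleanest fallback, should the reciprocity prove awkward, is an induction on $d(t_0,t)$ run directly on the $F$-polynomial exchange recurrence of \cite{ca4}, where the sign-coherence of $\cc_{j;t}$ makes the monomial prefactor of one of the two exchange terms trivial and so propagates both the constant term and the dominant monomial simultaneously.
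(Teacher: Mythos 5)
Your treatment of parts (i), (iii), (iv) and (v) is correct and essentially coincides with the paper's own proof: (i) is the quoted equivalence of \eqref{eq:C-sign-coherence} with \eqref{eq:F-constant-term} from \cite[Proposition~5.6]{ca4}; (iii) and (iv) are read off from \eqref{eq:G-C-opposite} and \eqref{eq:G-C-inverse} exactly as in the paper; and your derivation of (v) --- transporting the first equality of \eqref{eq:C'G'-thru-CG-right-end} through \eqref{eq:G-C-opposite}, transposing via \eqref{eq:J+B-ell-transpose-inverse}, and checking entries using that $\varepsilon$ is the common sign of the $k$-th entries of the $\gg$-vectors --- is the paper's argument for (v) run in the opposite direction (the paper instead rewrites \eqref{eq:Langlands-dual-trop} in a sign-independent matrix form via \eqref{eq:c+b+identity}, transposes into the $\cc$-matrix world, and then recognizes the $\varepsilon=\varepsilon_k$ specialization as an instance of \eqref{eq:C'G'-thru-CG-right-end}). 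Your bookkeeping of the patterns (mutation commutes with transposition; rooting at $t_1$ with $B'=\mu_k(B)$ gives the same exchange matrix pattern) is also right.

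The genuine gap is part (ii). You do not prove it: the proposed reciprocity $F_{j;t}^{B;t_0}(u)=u^{\mathbf{h}_{j;t}}\,F_{j;t}^{-B;t_0}(u^{-1})$ is stated as a plan, with the exponent vector unidentified and no argument that such an identity holds, and your fallback induction is likewise only sketched --- and it is more delicate than you suggest, since the $F$-polynomial recurrence involves \emph{division} by $F_{\ell;t}$, so propagating control of the top monomial through it is not routine. The paper closes this part with no new argument at all: the equivalence of (i) and (ii) was already established in \cite[Section~5]{ca4} (Proposition~5.3 there, relating Conjectures~5.4 and~5.5), so that \cite[Proposition~5.6]{ca4} yields both (i) and (ii) at once from \eqref{eq:C-sign-coherence}. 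Thus the missing ingredient is precisely a citation; indeed, the reciprocity you conjecture is essentially the mechanism behind the proof of that equivalence in \cite{ca4}, but within your proposal it remains an unproven assertion, so part (ii), which you correctly single out as the real obstacle, is left open as written.
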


\begin{proof}
The equivalence of (i) and (ii) was shown in \cite[Section~5]{ca4} (see Conjectures~5.4, 5.5 and Proposition~5.3 there).
The fact that \eqref{eq:C-sign-coherence} implies conjectures (i) and (ii) was shown in \cite[Proposition~5.6]{ca4}
(note that \eqref{eq:C-sign-coherence} appears as condition (ii$'$) in the proof).

\smallskip

Part (iii) is immediate from \eqref{eq:C-sign-coherence} and \eqref{eq:G-C-opposite}.

\smallskip

Part (iv) can be restated by saying that the matrix
$G_t^{B;t_0}$ with columns $\gg_{j;t}^{B;t_0}$ is invertible over~$\ZZ$.
But this is immediate from \eqref{eq:G-C-inverse}.

\smallskip

It remains to prove (v).
Replacing $\min(g_k,0)$ with $- [- g_k]_+$, and using
\eqref{eq:c+b+identity}, we rewrite the desired equality \eqref{eq:Langlands-dual-trop} in the matrix form as follows:
\begin{equation}
\label{eq:Langlands-dual-trop-matrix}
G_{t}^{B';t_1} = (J_k + [\varepsilon B]_+^{\bullet k}) G_{t}^{B;t_0} + B
[- \varepsilon G_{t}^{B;t_0}]_+^{k \bullet} \ ,
\end{equation}
for any choice of sign $\varepsilon = \pm 1$.
Taking transpose matrices on both sides of \eqref{eq:Langlands-dual-trop-matrix} and using
\eqref{eq:G-C-opposite}, we can rewrite \eqref{eq:Langlands-dual-trop-matrix} as
\begin{equation}
\label{eq:Langlands-dual-trop-C-version}
C_{t_1}^{B_t^T;t} = C_{t_0}^{B_t^T;t} (J_k + [\varepsilon B^T]_+^{k \bullet})  +
[- \varepsilon C_{t_0}^{B_t^T;t}]_+^{\bullet k} B^T \ .
\end{equation}
In view of \eqref{eq:C-sign-coherence-matrix-form},
choosing $\varepsilon = \varepsilon_k(C_{t_0}^{B_t^T;t})$, we see that \eqref{eq:Langlands-dual-trop-C-version}
takes the form
\begin{equation}
\label{eq:Langlands-dual-trop-C-version-short}
C_{t_1}^{B_t^T;t} = C_{t_0}^{B_t^T;t} (J_k + [\varepsilon_k(C_{t_0}^{B_t^T;t})\ B^T]_+^{k \bullet}) \ ,
\end{equation}
which is just the first equality in \eqref{eq:C'G'-thru-CG-right-end} (with $(t,t')$ replaced by $(t_0, t_1)$, and
$(B;t_0)$ replaced by $(B_t^T;t)$).
This proves \eqref{eq:Langlands-dual-trop-matrix} and completes the proof of
Proposition~\ref{pr:c-coherence-implies-conjectures}.
\end{proof}

\end{document}